\newcommand{\p}{\ensuremath{\mathfrak{p}}}
\newcommand{\q}{\ensuremath{\mathfrak{q}}}
\newtheorem{thm}{Theorem}[section]
\newtheorem*{thm*}{Theorem}%[section]
\newtheorem{lm}[thm]{Lemma}
\newtheorem{prop}[thm]{Proposition}
\theoremstyle{definition}\newtheorem{defi}[thm]{Definition}
\theoremstyle{definition}\newtheorem{ex}[thm]{Example}
\theoremstyle{remark}\newtheorem{rem}[thm]{Remark}
\DeclareMathOperator{\he}{ht}
\DeclareMathOperator{\trdeg}{tr.deg}
\DeclareMathOperator{\spec}{Spec}
\DeclareMathOperator{\codim}{codim}
\newcommand{\ra}{\rightarrow}
\newcommand{\func}[3]{\ensuremath{#1\mathpunct: #2\ra #3}}
\newcommand{\injfunc}[3]{\ensuremath{#1\mathpunct: #2\hookrightarrow #3}}
\begin{document}

\title[Some remarks on biequidimensionality]{Some remarks on
  biequidimensionality of topological spaces and Noetherian schemes}
\author{Katharina Heinrich} \address{KTH Royal Institute of
  Technology, Institutionen f\"or matematik, 10044 Stockholm, Sweden}
\email{kchal@math.kth.se} \subjclass[2010]{13C15, 13E05, 14A05, 14A25}
\keywords{Biequidimensionality, spectrum of a Noetherian ring,
  dimension formula, codimension function}

\begin{abstract}
  There are many examples of the fact that dimension and codimension
  behave somewhat counterintuitively. In EGA it is stated that a
  topological space is equidimensional, equicodimensional and catenary
  if and only if every maximal chain of irreducible closed subsets has
  the same length. We construct examples that show that this is not
  even true for the spectrum of a Noetherian ring. This gives rise to
  two notions of biequidimensionality, and we show how these relate to
  the dimension formula and the existence of a codimension function.
\end{abstract}

\maketitle
\thispagestyle{empty}

\section{Introduction}
Unless otherwise stated, all topological spaces considered here are
Noetherian $T_0$-spaces of finite dimension.

For a topological space $X$ we define its {\it Krull dimension} and
codimension in terms of maximal chains of irreducible closed
subsets. Then the following definitions are standard, see for example
\cite[D\'efinition (0.14.1.3), D\'efinition (0.14.2.1) and Proposition
(0.14.3.2)]{EGAIV1}.
\begin{defi}
  Let $X$ be a topological space.
  \begin{enumerate}
  \item The space $X$ is {\em equidimensional} if all irreducible
    components of $X$ have the same dimension.
  \item The space $X$ is {\em equicodimensional} if all minimal
    irreducible closed subsets of $X$ have the same codimension in
    $X$.
  \item The space $X$ is {\em catenary} if for all irreducible closed
    subsets $Y\subseteq Z$ all saturated chains of irreducible
    closed subsets that start with $Y$ and end in $Z$ have the same
    length.
  \end{enumerate}
\end{defi}
In addition, we define the following.
\begin{defi}\label{defi:equidim}
  Let $X$ be a topological space.
  \begin{enumerate}
  \item\label{item:defi_w_equidim} The space $X$ is {\em weakly
      biequidimensional} if it is equidimensional, equicodimensional
    and catenary.
   \item\label{item:defi_equidim} The space $X$ is {\em
       biequidimensional} if all maximal chains of irreducible closed
     subsets of $X$ have the same length.
  \end{enumerate}
\end{defi}
We will see in Lemma~\ref{lm:1_to_2} that every biequidimensional
space is weakly biequidimensional. In \cite[Proposition
(0.14.3.3)]{EGAIV1} it is moreover claimed that a topological space is
equidimensional, equicodimensional and catenary if and only if all
maximal chains have the same length. Furthermore, they define a space
to be biequidimensional if ``those equivalent properties'' hold. In
Section~\ref{sec:counterex}, however, we construct examples that show
that this is not the case even for Noetherian affine schemes.

The results on biequidimensional schemes stated in EGA are correct as
long as biequidimensional is defined in the stronger sense. In
Section~\ref{sec:dimension-formula} we show for example that the {\em
  dimension formula} \cite[Corollaire (0.14.3.5)]{EGAIV1} need not
hold for weakly biequidimensional schemes. Moreover, we prove in
Section~\ref{sec:codim-function} that every biequidimensional space
has a codimension function whereas a weakly biequidimensional space
need not.

The main reference for biequidimensional spaces and schemes is
\cite{EGAIV1}. In accordance with the incorrect equivalence
\cite[Proposition (0.14.3.3)]{EGAIV1}, many articles define
biequidimensional as equidimensional, equicodimensional and catenary,
and then they use properties like the dimension formula, that only
hold for spaces or schemes that are biequidimensional in the stronger
sense. In most cases the spaces considered are even biequidimensional
in the stronger sense, so the damage is relatively small. The purpose
of this article is to raise awareness of the difference between the
two concepts.

\subsection*{Acknowledgments.} I thank David Rydh for
reading this manuscript and giving valuable suggestions for
improvement.

\section{Biequidimensionality}\label{sec:biequidimensionality}
Before constructing the examples, we discuss the two notions of
bi\-equi\-dimensionality. As advertised earlier, we show first that
the property weakly biequidimensional is indeed weaker.
\begin{lm}\label{lm:1_to_2}
  Let $X$ be a topological space. If $X$ is biequidimensional, then
  $X$ is weakly biequidimensional.
\end{lm}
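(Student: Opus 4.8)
The plan is to verify the three defining properties of weak biequidimensionality separately, each by the same device: an arbitrary chain of irreducible closed subsets relevant to the property can be completed to a \emph{maximal} chain of $X$ without affecting the quantity under consideration, and then the hypothesis forces every such length to equal the common value $n:=\dim X$.

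First I would record the elementary facts about chains in a Noetherian $T_0$-space of finite dimension that make these completions legitimate: (i) every nonempty irreducible closed subset contains a minimal irreducible closed subset (descending chains of closed subsets stabilize); (ii) every irreducible closed subset is contained in an irreducible component (ascending chains of closed subsets stabilize); (iii) any finite chain of irreducible closed subsets can be refined to a saturated chain and then extended, downward and upward, to a maximal chain of $X$, the process terminating because $\dim X<\infty$; and (iv) concatenating two saturated chains along a common top/bottom term yields a saturated chain, so a saturated chain whose smallest term is a minimal irreducible closed subset and whose largest term is an irreducible component is a maximal chain of $X$. Write $n$ for the common length of the maximal chains of $X$.

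For catenariness, fix irreducible closed $Y\subseteq Z$. Using (i)--(iii) fix once and for all a saturated chain from a minimal irreducible closed subset up to $Y$, of length $a$, and a saturated chain from $Z$ up to an irreducible component, of length $b$. If $C_1,C_2$ are two saturated chains from $Y$ to $Z$, of lengths $k_1$ and $k_2$ respectively, then by (iv) splicing each $C_i$ between the two fixed chains produces a maximal chain of $X$; hence $a+k_1+b=n=a+k_2+b$ and $k_1=k_2$. For equicodimensionality, let $Z_0$ be a minimal irreducible closed subset. Then $\codim(Z_0,X)$ is the supremum of the lengths of chains of irreducible closed subsets with smallest term $Z_0$; every such chain refines and extends upward — but not downward, $Z_0$ being minimal — to a maximal chain of $X$, so it has length at most $n$, while completing the one-term chain $\{Z_0\}$ produces a chain of length exactly $n$. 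Thus $\codim(Z_0,X)=n$ for every minimal $Z_0$. For equidimensionality, the symmetric argument applied to chains with largest term a fixed irreducible component $Z$ gives $\dim Z=n$ for each component $Z$.

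The step that needs the most care is the bookkeeping in (iii)--(iv): one must pin down that ``maximal chain'' means a chain admitting neither a refinement nor a proper extension at either end, and check that the chains manufactured by the Noetherian process really do have a minimal irreducible closed subset at the bottom and an irreducible component at the top, so that no further extension is possible; this is precisely where the hypotheses $T_0$, Noetherian and finite-dimensional all enter. Granting that, the three verifications above are immediate.
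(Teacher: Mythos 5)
Your argument is correct and is essentially the paper's own proof: each of the three properties is obtained by completing (splicing) the relevant saturated chains into maximal chains of $X$ by fixed auxiliary chains and invoking the common length $n=\dim X$; you merely make explicit the bookkeeping the paper leaves implicit. (One cosmetic point: in your fact (ii) the justification should refer to chains of \emph{irreducible} closed subsets, whose lengths are bounded by $\dim X$, or to the finiteness of the set of irreducible components of a Noetherian space, since ascending chains of arbitrary closed subsets in a Noetherian space need not stabilize.)
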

\begin{proof}
  Let $Z$ be an irreducible component in $X$. Every maximal chain of
  length $\dim(Z)$ in $Z$ is a maximal chain in $X$, and hence it has
  length $\dim(X)$. So $X$ is equidimensional. 

  In the same way it follows that every minimal irreducible closed
  subset has codimension $\dim(X)$, that is, the space $X$ is
  equicodimensional.

  Now let $Y\subseteq Z$ be two irreducible closed subsets in $X$. All
  saturated chains between $Y$ and $Z$ can be completed by the same
  irreducible sets to maximal chains in $X$, and hence they have the
  same length.
\end{proof}

As we will see in Section~\ref{sec:counterex}, the converse implication
does not hold. However, we have the following result.
\begin{lm}\label{lm:2-to-1_twist}
  Let $X$ be a topological space. If $X$ is equidimensional, catenary
  and every irreducible component of $X$ is equicodimensional, then
  $X$ is biequidimensional.
\end{lm}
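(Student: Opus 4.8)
The plan is to prove the statement in its most direct form: that every maximal chain of irreducible closed subsets of $X$ has length $\dim(X)$.

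First I would record two bookkeeping facts valid in any Noetherian $T_0$-space. (a) If $Z_0\subsetneq Z_1\subsetneq\cdots\subsetneq Z_n$ is a maximal chain of irreducible closed subsets of $X$, then $Z_n$ is an irreducible component of $X$ and $Z_0$ is a minimal irreducible closed subset of $X$; otherwise the chain could be extended at the top or bottom. (b) If $Z$ is an irreducible component of $X$ and $X$ is catenary, then $Z$ (with the subspace topology) is catenary: the irreducible closed subsets of $Z$ are precisely the irreducible closed subsets of $X$ contained in $Z$, so for $Y\subseteq Y'\subseteq Z$ a chain between $Y$ and $Y'$ is saturated in $Z$ if and only if it is saturated in $X$. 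I expect no difficulty here; these are routine consequences of the Noetherian $T_0$ hypothesis.

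Next, fix an arbitrary maximal chain $\mathcal{C}\colon Z_0\subsetneq\cdots\subsetneq Z_n$ in $X$. By (a), $Z_n$ is an irreducible component and $Z_0$ is minimal in $X$, hence also minimal in $Z_n$. Moreover $\mathcal{C}$ is a maximal chain of the space $Z_n$: its top is all of $Z_n$, its bottom is minimal in $Z_n$, and any irreducible closed subset of $Z_n$ that could be inserted would also be insertable in $X$. Now I apply the hypotheses to $Z_n$, which is irreducible, catenary by (b), and equicodimensional. Catenary applied to the pair $Z_0\subseteq Z_n$ shows that all saturated chains from $Z_0$ to $Z_n$ have one common length; equicodimensionality of $Z_n$ shows that this length does not depend on which minimal irreducible closed subset $Z_0$ one starts from. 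Since every maximal chain of $Z_n$ is a saturated chain from some minimal irreducible closed subset up to $Z_n$, all maximal chains of $Z_n$ have the same length, and therefore this length equals $\dim(Z_n)$. In particular $n=\dim(Z_n)$.

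Finally, equidimensionality of $X$ gives $\dim(Z_n)=\dim(X)$ for every irreducible component $Z_n$ of $X$. Hence $n=\dim(X)$, and as $\mathcal{C}$ was an arbitrary maximal chain, every maximal chain of irreducible closed subsets of $X$ has length $\dim(X)$; that is, $X$ is biequidimensional. The one point that repays a little care is the passage in the third paragraph from "catenary plus equicodimensional plus irreducible" to "all maximal chains have length $\dim(Z_n)$", but this is essentially immediate once the hypotheses have been localized onto a single irreducible component.
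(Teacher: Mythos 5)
Your proof is correct and takes essentially the same route as the paper: fix a maximal chain, use catenariness to identify its length with $\codim(Z_0,Z_n)$, use equicodimensionality of the component $Z_n$ to identify that with $\dim(Z_n)$, and finish with equidimensionality. Your extra bookkeeping (passing to the subspace $Z_n$ and checking that catenariness restricts to it) is just a more detailed version of the paper's direct use of catenariness of $X$.
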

\begin{proof}
  Let $X_0\subsetneq X_1\subsetneq \ldots \subsetneq X_k$ be a maximal
  chain of irreducible closed subsets in $X$. We have to show that it
  has length $\dim(X)$.

  Since $X$ is catenary, we have that $k=\codim(X_0,X_k)$. The minimal
  subset $X_0$ has codimension $\dim(X_k)$ in the equicodimensional
  component $X_k$. Finally we have that $\dim(X_k)=\dim(X)$ as $X$ is
  equidimensional.
\end{proof}
\begin{prop}\label{prop:biequidim_equiv}
  Let $X$ be a topological space. Then the following are equivalent.
  \begin{enumerate}
  \item\label{item:bieq1} The space $X$ is biequidimensional.
  \item\label{item:bieq2} The space $X$ is equidimensional, and for all
    irreducible closed subsets $Y\subseteq Z$ of $X$ we have
    that \begin{equation}\dim(Z) = \dim(Y) +
      \codim(Y,Z). \label{eq:bieq1} \end{equation}
  \item\label{item:bieq3} The space $X$ is equicodimensional, and for
    all irreducible closed subsets $Y\subseteq Z$ of $X$ we have
    that \begin{equation} \codim(Y,X) = \codim(Y,Z) +
      \codim(Z,X).\label{eq:bieq2}
\end{equation}
\end{enumerate}
\end{prop}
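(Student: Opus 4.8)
The plan is to prove the cyclic chain of implications \ref{item:bieq1} $\Rightarrow$ \ref{item:bieq2} $\Rightarrow$ \ref{item:bieq3} $\Rightarrow$ \ref{item:bieq1}. Throughout I will lean on the following elementary facts about a Noetherian $T_0$-space $X$ of finite dimension: every chain of irreducible closed subsets is finite and can be prolonged to a maximal one; the bottom term of a maximal chain is a minimal irreducible closed subset and its top term is an irreducible component; a chain cannot be refined between two consecutive terms $W\subsetneq W'$ exactly when $\codim(W,W')=1$; every irreducible closed subset is contained in some irreducible component; and, for $Y$ irreducible closed, $\codim(Y,X)=\sup_{Z'}\codim(Y,Z')$, the supremum over irreducible components $Z'$ containing $Y$, since any chain with least term $Y$ may be prolonged upward to terminate at a component.

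The one auxiliary statement I would isolate first is: \emph{if $X$ is biequidimensional and $Y\subseteq X$ is an irreducible closed subset, then $Y$ with the subspace topology is again biequidimensional.} Indeed, the bottom term of any maximal chain of $Y$ is minimal in $X$, and the chain, being saturated in $Y$, is saturated in $X$; hence appending to it one fixed saturated chain from $Y$ up to one fixed irreducible component $Z'\supseteq Y$ yields a maximal chain of $X$. Two maximal chains of $Y$ thus give two maximal chains of $X$ obtained by adjoining the same number of terms; since $X$ is biequidimensional both have length $\dim X$, so the two chains of $Y$ have equal length, necessarily $\dim Y$. I expect this to be the main obstacle --- not technically deep, but precisely the pitfall the paper is about: ``the length of a maximal chain in a subspace equals the dimension of the subspace'' is false in general, so one must genuinely use biequidimensionality of $X$ here rather than invoke it tacitly.

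For \ref{item:bieq1} $\Rightarrow$ \ref{item:bieq2}, equidimensionality is Lemma~\ref{lm:1_to_2}; and given irreducible closed $Y\subseteq Z$, I would concatenate a maximal chain of $Y$ (of length $\dim Y$ by the auxiliary statement) with a saturated chain from $Y$ to $Z$ realizing $\codim(Y,Z)$; this concatenation is a maximal chain of $Z$, so by the auxiliary statement applied to $Z$ it has length $\dim Z$, which is \eqref{eq:bieq1}. For \ref{item:bieq2} $\Rightarrow$ \ref{item:bieq3}: a minimal irreducible closed subset $P$ has $\dim P=0$, so \eqref{eq:bieq1} gives $\codim(P,Z')=\dim Z'=\dim X$ for every component $Z'\supseteq P$ (using equidimensionality), whence $\codim(P,X)=\dim X$ and $X$ is equicodimensional; moreover \eqref{eq:bieq1} and equidimensionality let one rewrite $\codim(Y,Z)=\dim Z-\dim Y$, $\codim(Z,X)=\dim X-\dim Z$ and $\codim(Y,X)=\dim X-\dim Y$ for all irreducible closed $Y\subseteq Z$, and then \eqref{eq:bieq2} is immediate by subtraction.

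For \ref{item:bieq3} $\Rightarrow$ \ref{item:bieq1}, I would first note that equicodimensionality forces the common codimension of the minimal irreducible closed subsets to equal $\dim X$: the value $\dim X$ is attained by a maximal chain, which runs from such a minimal subset to a component, so that minimal subset has codimension $\dim X$. Now let $W_0\subsetneq\cdots\subsetneq W_k$ be a maximal chain; then $W_0$ is minimal and $W_k$ is a component, so $\codim(W_0,X)=\dim X$ and $\codim(W_k,X)=0$, while $\codim(W_i,W_{i+1})=1$ for every $i$ by maximality. Applying \eqref{eq:bieq2} to $W_i\subseteq W_{i+1}$ gives $\codim(W_i,X)=1+\codim(W_{i+1},X)$, and summing over $i$ yields $k=\codim(W_0,X)-\codim(W_k,X)=\dim X$, as required.
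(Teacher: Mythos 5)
Your proposal is correct. The forward direction is essentially the paper's argument: your auxiliary statement (every irreducible closed subset of a biequidimensional space is again biequidimensional, proved by appending one fixed saturated chain up to a component) is exactly the device the paper uses, and the concatenation of a maximal chain of $Y$ with a saturated chain from $Y$ to $Z$ is the same derivation of Equation~(\ref{eq:bieq1}); you are also right that this is the one dangerous point, and you verify it rather than assume it. Where you genuinely diverge is in the converse. The paper proves \ref{item:bieq2}~$\Rightarrow$~\ref{item:bieq1} and \ref{item:bieq3}~$\Rightarrow$~\ref{item:bieq1} separately, in each case first deducing that $X$ is catenary from the additivity of codimension via \cite[Proposition (0.14.3.2)]{EGAIV1} and then using $k=\codim(X_0,X_k)$ for a maximal chain. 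You instead close the cycle \ref{item:bieq2}~$\Rightarrow$~\ref{item:bieq3}~$\Rightarrow$~\ref{item:bieq1}: the first step is pure arithmetic once one records $\codim(Y,X)=\max_{Z'}\codim(Y,Z')$ over components $Z'\supseteq Y$ together with equidimensionality, and the second step telescopes Equation~(\ref{eq:bieq2}) over the consecutive codimension-one links of a maximal chain, after noting that equicodimensionality pins the codimension of every minimal irreducible closed subset at $\dim X$. This bypasses catenarity and the external citation altogether, so your converse is more elementary and self-contained; the paper's route has the side benefit of making the catenarity of $X$ explicit (the bridge to the weak notion and Lemma~\ref{lm:1_to_2}), which your cyclic scheme obtains only a posteriori. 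All the individual steps you rely on (bottom of a maximal chain is minimal, top is a component, saturation of consecutive terms means codimension one, existence of a maximal chain of length $\dim X$) are correctly identified and justified.
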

\begin{proof}
  These equivalences are shown in \cite[Proposition
  (0.14.3.3)]{EGAIV1}. For the sake of completeness we give a proof
  here as well.

  Suppose first that $X$ is biequidimensional. We showed in
  Lemma~\ref{lm:1_to_2} that $X$ is equidimensional, equicodimensional
  and catenary. Let $Y\subseteq Z$ be two irreducible closed
  subsets. All maximal chains in $Z$ can be extended by the same
  irreducible sets containing $Z$ to maximal chains in $X$, and hence
  they all have length $\dim(Z)$. In particular this holds for the
  chain obtained by composing a saturated chain of length $\dim(Y)$ in
  $Y$ and one of length $\codim(Y,Z)$ between $Y$ and $Z$, and
  Equation~(\ref{eq:bieq1}) follows. Equation~(\ref{eq:bieq2}) can be
  shown in the same way.

  For the converse implications, we show first that each of the
  Equations~(\ref{eq:bieq1}) and~(\ref{eq:bieq2}) implies that $X$ is
  catenary. Let $Y\subseteq Z \subseteq T$ be irreducible closed
  subsets in $X$.  Applying formulas~(\ref{eq:bieq1}) and
  (\ref{eq:bieq2}) respectively to all three inclusions $Y\subseteq
  Z$, $Z\subseteq T$ and $Y\subseteq T$ gives \[\codim(Y,T) =
  \codim(Y,Z)+\codim(Z,T).\] By \cite[Proposition (0.14.3.2)]{EGAIV1},
  this implies that $X$ is catenary.

  Now let $X_0\subsetneq X_1\subsetneq \ldots \subsetneq X_k$ be a
  maximal chain in $X$. Then $X_0$ is a closed point, and $X_k$ is an
  irreducible component. Suppose first that $X$ is as in
  assertion~\ref{item:bieq2}. Then $X$ is catenary, and hence we have
  that $k=\codim(X_0,X_k)$. Since the closed point $X_0$ has dimension
  $0$ and the irreducible component $X_k$ has dimension $\dim(X)$ by
  equidimensionality, Equation~(\ref{eq:bieq1}) applied to the
  inclusion $X_0\subsetneq X_k$ implies that $k=\dim(X)$.

  Similarly, in the case of assertion~\ref{item:bieq3} we have that
  $\codim(X_k,X)=0$ and, by equicodimensionality, that
  $\codim(X_0,X)=\dim(X)$. Hence Equation~(\ref{eq:bieq2}) implies
  that $k=\dim(X)$.
\end{proof}
Proposition~\ref{prop:biequidim_equiv} shows that assertions (a), (c)
and (d) in \cite[Proposition (0.14.3.3)]{EGAIV1} are equivalent, and by
Lemma~\ref{lm:1_to_2} they imply assertion~(b).

In the case that $X$ is a scheme, biequidimensionality can also be
described in the following way.
\begin{lm}\label{lm:scheme_bieq}
  Let $X$ be a scheme. Then the following are equivalent.
  \begin{enumerate}
  \item The scheme $X$ is biequidimensional.
  \item For every closed point $x\in X$ the local ring $\mathcal{O}_{X,x}$
    is catenary and equidimensional of dimension $\dim(X)$.
  \end{enumerate}
\end{lm}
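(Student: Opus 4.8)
The plan is to count maximal chains of irreducible closed subsets of $X$ locally around closed points, using the standard correspondence between such chains through a point and chains of prime ideals in the local ring there, and then to conclude with Lemma~\ref{lm:1_to_2} and Lemma~\ref{lm:2-to-1_twist}.

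I would first set up the dictionary. For a closed point $x\in X$, the space $\spec\mathcal{O}_{X,x}$ is canonically the set of generizations of $x$ in $X$ with the induced topology, and $y\mapsto\overline{\{y\}}$ gives an inclusion-preserving bijection between the irreducible closed subsets of $\spec\mathcal{O}_{X,x}$ and the irreducible closed subsets of $X$ containing $x$. Under it, $\{x\}$ is the unique closed point of $\spec\mathcal{O}_{X,x}$, and an irreducible closed subset of $X$ through $x$ is an irreducible component of $X$ exactly when the corresponding irreducible closed subset is an irreducible component of $\spec\mathcal{O}_{X,x}$, since a strictly larger irreducible closed subset of $X$ still contains $x$. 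Hence this bijection restricts to a length-preserving bijection between the maximal chains of irreducible closed subsets of $X$ with smallest member $\{x\}$ and the maximal chains of $\spec\mathcal{O}_{X,x}$.

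Since $X$ is a scheme, its minimal irreducible closed subsets are its closed points, so every maximal chain in $X$ has $\{x\}$ for some closed point $x$ as its smallest member, and conversely every such $\{x\}$ extends to a maximal chain. Combining this with the previous paragraph, $X$ is biequidimensional (all maximal chains of length $\dim(X)$) if and only if for every closed point $x$ all maximal chains of $\spec\mathcal{O}_{X,x}$ have length $\dim(X)$, i.e. if and only if for every closed point $x$ the scheme $\spec\mathcal{O}_{X,x}$ is biequidimensional with $\dim(\mathcal{O}_{X,x})=\dim(X)$.

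Finally, for a local ring $A$ the space $\spec A$ is trivially equicodimensional, having a single closed point, and each of its irreducible components is $\spec(A/\p)$ with $A/\p$ local, hence also equicodimensional; so Lemma~\ref{lm:2-to-1_twist} makes $\spec A$ biequidimensional as soon as $A$ is catenary and equidimensional, while Lemma~\ref{lm:1_to_2} gives the converse. Taking $A=\mathcal{O}_{X,x}$, the condition of the previous paragraph becomes exactly assertion~(ii). I expect the only real care to be needed in the chain correspondence of the second paragraph — verifying that "maximal chain" is preserved both ways because the extremal members match up (closed point at the bottom, irreducible component at the top); the remaining ingredients are the routine localization picture and a direct appeal to the two lemmas.
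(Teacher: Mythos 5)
Your argument is correct and follows essentially the same route as the paper's proof: the order-preserving correspondence between $\spec(\mathcal{O}_{X,x})$ and the irreducible closed subsets of $X$ through a closed point $x$, the observation that maximal chains in a scheme begin at (singletons of) closed points, and the fact that a catenary, equidimensional local ring has all maximal chains of length equal to its dimension. The only difference is organizational: you package the local step as biequidimensionality of $\spec(\mathcal{O}_{X,x})$ and invoke Lemmas~\ref{lm:1_to_2} and~\ref{lm:2-to-1_twist}, whereas the paper argues the two implications directly (in the forward direction deducing $\dim(\mathcal{O}_{X,x})=\dim(X)$ from equicodimensionality of $X$ via Lemma~\ref{lm:1_to_2}).
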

\begin{proof}
  Let $x\in X$ be a point. Then there is an order-preserving bijection
  between the spectrum of the local ring $\mathcal{O}_{X,x}$ and the 
  irreducible closed subsets $Y$ of $X$ containing $\overline{\{x\}}$. In
  particular, we see that $X$ is catenary if and only if all local
  rings $\mathcal{O}_{X,x}$ are catenary. Moreover, a local ring
  $\mathcal{O}_{X,x}$ is equidimensional if and only if the subset
  $\overline{\{x\}}$ has the same codimension in every irreducible
  component of $X$ that contains it.

  Suppose first that $X$ is biequidimensional. Then $X$ and hence all
  local rings are catenary by Lemma~\ref{lm:1_to_2}. We have,
  moreover, that $\dim(\mathcal{O}_{X,x})=\codim(\overline{\{x\}},X)$ and,
  as $X$ is equicodimensional, for a closed point $x$ the latter
  number equals $\dim(X)$. Let $Z$ be an irreducible component
  containing the closed point $x$. Every saturated chain between
  $\{x\}$ and $Z$ is a maximal chain in $X$, and hence it has length
  $\dim(X)$. This shows that every closed point has the same
  codimension in every irreducible component containing it.

  Conversely, suppose that all local rings at closed points are
  catenary and equidimensional of dimension $\dim(X)$. Let
  $X_0\subsetneq \ldots \subsetneq X_k$ be a maximal chain in
  $X$. Then $X_0=\{x\}$ is a closed point, and there is an associated
  maximal chain in $\spec(\mathcal{O}_{X,x})$. Since the local ring
  $\mathcal{O}_{X,x}$ is catenary and equidimensional, it follows that
  $k=\dim(\mathcal{O}_{X,x})$. Hence all maximal chains in $X$ have
  length $\dim(X)$.
\end{proof}

\begin{rem}
  Note however that for a scheme to be biequidimensional it is not
  sufficient that it is equidimensional and that all local rings are
  catenary and equidimensional. Consider, for example, the spectrum
  $X$ of the localization of $k[u,v,w,x,y,z]/(wx)$ away from the union
  of the prime ideals $(u,v,w)$, $(w,x)$ and $(x,y,z)$. Then $X$ is
  equidimensional of dimension $2$. Being essentially of finite type
  over a field, the scheme $X$ and hence the local rings are catenary.
  The point corresponding to the maximal ideal $(w,x)$ has codimension
  $1$ in both irreducible components of $X$. As this is the only point
  that is contained in both components, we see that all local rings
  are equidimensional.  However, we have the following maximal chains
  of prime ideals in $X$:
  \begin{equation*} \xymatrix{(w) && (x) \\ (v,w)\ar@{-}[u] && (x,y)
      \ar@{-}[u] \\ (u,v,w)\ar@{-}[u] &
      (w,x)\ar@{-}[uur]\ar@{-}[uul] & (x,y,z)\rlap{.} \ar@{-}[u]}
  \end{equation*}
  Here the lines denote inclusion. We see that there are maximal chains
  of length $1$ and $2$. Hence $X$ is not biequidimensional.
\end{rem}
\begin{lm}\label{lm:fin_type}
  Let $X$ be an equidimensional scheme that is locally of finite type
  over a field $k$. Then $X$ is biequidimensional.
\end{lm}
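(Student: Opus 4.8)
The plan is to reduce to Lemma~\ref{lm:2-to-1_twist}. Since $X$ is equidimensional by hypothesis, it remains to show that $X$ is catenary and that every irreducible component of $X$ is equicodimensional; the lemma then yields the conclusion.

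That $X$ is catenary is standard and local: $X$ is covered by affine opens $\spec A$ with $A$ a finitely generated $k$-algebra, and finitely generated algebras over a field are catenary (a classical consequence of Noether normalization and the dimension formula), so $X$ is catenary.

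For the components, let $Z$ be an irreducible component of $X$, regarded as an integral scheme with its reduced structure; it is again locally of finite type over $k$. Its minimal irreducible closed subsets are exactly its closed points (any nonempty closed subset of a Noetherian scheme contains a closed point, and $\overline{\{x\}}$ is minimal precisely when $x$ is closed), and for such a point $\codim(\overline{\{x\}},Z)=\dim\mathcal{O}_{Z,x}$. So equicodimensionality of $Z$ comes down to showing that $\dim\mathcal{O}_{Z,x}$ is the same for all closed points $x$. I would pick an affine open $\spec A\subseteq Z$ with $x\in\spec A$ and $A$ finitely generated over $k$; then $A$ is a domain (as $Z$ is integral), and $x$ corresponds to a maximal ideal $\m$ (since $\{x\}$, being closed in $Z$, is closed in $\spec A$). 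The dimension theory of finitely generated algebras over a field then gives $\dim\mathcal{O}_{Z,x}=\he(\m)=\dim A-\dim(A/\m)=\dim A$, and $\dim A$ equals the transcendence degree over $k$ of the fraction field of $A$, which is the function field of $Z$ and hence independent of the chosen chart. Thus every closed point of $Z$ has codimension $\dim Z$ in $Z$, so $Z$ is equicodimensional.

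Combining, $X$ is equidimensional, catenary, and has equicodimensional irreducible components, and Lemma~\ref{lm:2-to-1_twist} shows that $X$ is biequidimensional. The only non-formal ingredient is the classical dimension theory of finitely generated algebras over a field (catenarity, and $\he(\m)=\dim A$ for a maximal ideal in a finitely generated domain over $k$). The step that needs a little care is the reduction to an affine chart of the component $Z$: one must check that closed points of $Z$ remain closed in the chart and that $\dim A$ is chart-independent — both handled via the function field — rather than this being an entirely formal manipulation.
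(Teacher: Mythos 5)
Your proposal is correct and follows essentially the same route as the paper: reduce to Lemma~\ref{lm:2-to-1_twist}, get catenarity from the classical fact that finitely generated algebras over a field are catenary, and get equicodimensionality of each component from the dimension theory of finitely generated $k$-algebras (all closed points having codimension equal to the transcendence degree of the function field, which the paper cites as EGA IV, (5.2.1)). The only cosmetic difference is that you verify $\he(\m)=\dim A$ by hand in an affine chart, where the paper invokes the EGA reference directly.
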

\begin{proof}
  By Lemma~\ref{lm:2-to-1_twist}, it suffices to show that every
  irreducible component $Y$ of $X$ is catenary and equicodimensional.

  First we observe that all local rings $\mathcal{O}_{Y,y}$ are
  localizations of finitely generated $k$-algebras and hence, by
  \cite[Corollaire (0.16.5.12)]{EGAIV1}, they are catenary. As there
  is a bijection between the prime ideals in $\mathcal{O}_{Y,y}$ and the
  irreducible closed subsets of $Y$ containing $\overline{\{y\}}$, it
  follows that $Y$ is catenary.

  Let $\xi$ be the generic point in $Y$. By \cite[Proposition (5.2.1)
  and Equation (5.2.1.1)]{EGAIV2}, we have that
  $\dim(\mathcal{O}_{Y,y})=\trdeg_k(\kappa(\xi))=\dim(Y)$ for every
  closed point $y\in Y$.  It follows that all closed points have the
  same codimension in $Y$, that is, $Y$ is equicodimensional.
\end{proof}

\section{Construction of counterexamples}\label{sec:counterex}
In this section, we construct examples of topological spaces, affine
schemes and even Noetherian affine schemes that are weakly
biequidimensional but not biequidimensional.
\subsection{Topological spaces} Our first counterexample is the
following finite topological space.
\begin{ex}\label{ex:fliege_top}
  Consider the finite topological space $X$ with six points
  $x_1,\ldots,x_6$, each of them being the generic point of an
  irreducible closed subset. The relations between its irreducible
  closed subsets are given by the following diagram
  \[\xymatrix{\overline{\{x_5\}}\ar@{-}[d] & \overline{\{x_6\}}
    \ar@{-}[d] \\ \overline{\{x_3\}} \ar@{-}[d] & \overline{\{x_4\}}
    \ar@{-}[d]\\ \overline{\{x_1\}}\ar@{-}[uur] &
    \overline{\{x_2\}}\rlap{,}\ar@{-}[uul]}\]
  where a line between two sets denotes inclusion, with the sets
  increasing from bottom to top. Then the topological space $X$ is
  clearly equidimensional, equicodimensional and catenary. However,
  there are maximal chains of length $1$ as well as chains of length
  $2$.
\end{ex}

\subsection{Affine schemes}
So the properties biequidimensional and weakly bi\-equidimensional are
not equivalent, at least not in the given generality. This gives rise
to the question if they are equivalent at least in the particular case
that $X$ is the underlying topological space of an (affine) scheme.

However, even in this situation the answer is no. The following
theorem classifies {\em spectral spaces}, that is, topological spaces
that are the underlying topological space of an affine scheme.
\begin{thm}[{\cite[Theorem 6 and Proposition
    10]{Hochster}}]\label{thm:Hochster}
  Let $X$ be a topological space. The following are equivalent.
  \begin{enumerate}
  \item The space $X$ is isomorphic to the underlying topological
    space of the spectrum of a ring.
  \item The space $X$ is the projective limit of finite $T_0$-spaces.
  \end{enumerate}
\end{thm}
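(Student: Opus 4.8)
The statement is Hochster's theorem, and the plan is to prove the two implications by routing through the intermediate notion of a \emph{spectral space}: a quasi-compact $T_0$-space that is sober and carries a basis of quasi-compact open subsets stable under finite intersections. Concretely I would establish (i) $\Rightarrow$ ``$X$ is spectral'' $\Rightarrow$ (ii) and (ii) $\Rightarrow$ ``$X$ is spectral'' $\Rightarrow$ (i); the main purpose of the write-up is to make clear which of these four arrows carries the real content.

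Three of the arrows are soft. For (i) $\Rightarrow$ ``$X$ is spectral'' one simply checks that $\Spec A$ is quasi-compact, $T_0$ and sober and that the principal opens $D(f)$ together with their finite unions form a basis of quasi-compact opens stable under finite intersection. For ``$X$ is spectral'' $\Rightarrow$ (ii) I would work with the distributive lattice $L$ of quasi-compact open subsets of $X$: for each \emph{finite} sublattice $L'\subseteq L$ with $\emptyset,X\in L'$, declare $x\sim_{L'}y$ when $x$ and $y$ lie in exactly the same members of $L'$, and topologize the quotient $X_{L'}$ so that its open sets are precisely the images of the members of $L'$. Then each $X_{L'}$ is a finite $T_0$-space, these sublattices form a directed system, and the canonical continuous map $X\to\varprojlim_{L'}X_{L'}$ is injective because $X$ is $T_0$ with a basis of quasi-compact opens, surjective by a finite-intersection-property argument using quasi-compactness of $X$ in its patch (constructible) topology, and open because every quasi-compact open of $X$ already lies in some $L'$ and hence is the preimage of an open of $X_{L'}$; a continuous open bijection is a homeomorphism, so $X\cong\varprojlim_{L'}X_{L'}$. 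For (ii) $\Rightarrow$ ``$X$ is spectral'' one notes that every finite $T_0$-space is spectral (all opens are quasi-compact, opens are closed under finite intersections, and an irreducible closed subset of a finite poset is the closure of its unique generic point), that the transition maps of the given system are continuous maps of finite spaces and hence automatically spectral, and that an inverse limit of spectral spaces along spectral maps is again spectral — the limit embeds as a closed subspace of a product of patch-compact spaces, so it is quasi-compact, and the remaining axioms transfer routinely.

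The fourth arrow, ``$X$ is spectral'' $\Rightarrow$ (i), is the heart of the matter and is where I would expect essentially all of the difficulty to lie; by the ``$X$ is spectral'' $\Rightarrow$ (ii) step it amounts to showing that an inverse limit of finite $T_0$-spaces underlies an affine scheme. The strategy: (a) realize each finite $T_0$-space $X_i$ as $\Spec A_i$ for a suitable commutative ring $A_i$, by induction on the finite poset $X_i$, adjoining indeterminates and passing to suitable quotients and localizations so as to create and splice the prescribed chains of prime ideals; (b) realize each transition map $X_i\to X_j$ as $\Spec$ of a ring homomorphism $A_j\to A_i$, made compatibly along the whole inverse system; (c) put $A:=\varinjlim_i A_i$ and use that $\Spec$ carries a filtered colimit of rings to the cofiltered limit of their spectra, whence $\Spec A=\varprojlim_i\Spec A_i=\varprojlim_i X_i=X$. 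Step (a) is a finite combinatorial construction and step (c) is formal; the genuine obstacle is (b), namely producing these homomorphisms \emph{functorially}, so that all the relevant squares commute and one colimit ring suffices. This uniformity is precisely the technical core of Hochster's argument in \cite{Hochster}: soft category theory does not deliver it, and one is forced back onto his explicit ring-theoretic constructions.
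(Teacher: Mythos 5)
Note first that the paper does not prove this statement at all: it is quoted verbatim from Hochster with a citation to \cite{Hochster}, so there is no internal argument to compare yours with. Judged as a proof on its own terms, your write-up has a genuine gap, and you name it yourself: the arrow ``$X$ spectral $\Rightarrow$ $X\cong\spec(A)$ for some ring $A$'' is not established. Your three ``soft'' arrows are fine as sketches --- $\spec(A)$ is spectral; a spectral space is the limit of its finite $T_0$ quotients by finite sublattices of quasi-compact opens (injectivity from $T_0$ plus the basis, surjectivity from patch-compactness, openness because each quasi-compact open occurs in some $L'$); and a cofiltered limit of finite $T_0$-spaces is spectral. But these only reduce the theorem to its hard half, and for that half your plan stops exactly where the work begins: step (a) (realizing a single finite poset as a spectrum) is indeed elementary, and step (c) ($\spec$ turns filtered colimits of rings into cofiltered limits of spectra) is formal, but step (b) --- choosing the rings $A_i$ and ring homomorphisms $A_j\to A_i$ compatibly over the whole cofiltered system so that every square commutes --- is precisely the content of Hochster's theorem, and you conclude by deferring it to ``his explicit ring-theoretic constructions'' rather than giving an argument. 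A proposal that reduces the statement to its core difficulty and then cites the original paper for that difficulty is an outline, not a proof.

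The gap is not a mere bookkeeping issue. The functor $\spec$ from rings to spectral spaces is not full, so realizing each individual transition map $X_i\to X_j$ by \emph{some} ring map does not by itself produce a coherent inverse system of rings; one needs a construction that is functorial (at least on the relevant diagram), and no soft argument supplies it --- this is why Hochster's proof is long, and also why he obtains the stronger Proposition 10 about morphisms. If you want a self-contained route you would have to either reproduce Hochster's construction or substitute an alternative proof of the realization step (for instance via Stone duality: realize the bounded distributive lattice of quasi-compact opens as the lattice of radicals of finitely generated ideals of a suitable ring), and then check that it is compatible with your limit description. As it stands, the implication (ii) $\Rightarrow$ (i) --- the only one the paper actually uses, to realize the finite space of Example~\ref{ex:fliege_top} as an affine scheme --- is asserted, not proved.
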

In particular this implies that every finite $T_0$-space is the
spectrum of a ring, and we get the following counterexample.
\begin{ex}\label{ex:fliege_scheme}
  Consider the topological space $X$ discussed in
  Example~\ref{ex:fliege_top}. By Theorem~\ref{thm:Hochster}, it can be
  realized as the spectrum of a ring. This gives an
  affine scheme that is weakly biequidimensional but not
  biequidimensional.
\end{ex}

So the equivalence does not hold for general (affine) schemes, but it
might still apply for Noetherian schemes.

\subsection{Noetherian schemes} The space defined in
Example~\ref{ex:fliege_top} cannot be realized as the spectrum of a
Noetherian ring by the following result.

\begin{prop}\label{prop:infinite_he1}
  Let $A$ be a Noetherian ring, and let $\p_1\subsetneq \p_2$ be two
  prime ideals such that $\he_{A/\p_1}(\p_2/\p_1)\geq 2$. Then there
  exist infinitely many prime ideals $\q$ with $\p_1\subsetneq \q
  \subsetneq \p_2$.
\end{prop}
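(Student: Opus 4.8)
The plan is to reduce to a cleaner local situation and then invoke a standard prime-avoidance/Krull-dimension argument. First I would replace $A$ by $(A/\p_1)_{\p_2}$, so we may assume $A$ is a Noetherian local domain with maximal ideal $\m = \p_2$ and $\dim A = \he(\p_2/\p_1) \geq 2$; the prime ideals $\q$ with $\p_1 \subsetneq \q \subsetneq \p_2$ correspond to the nonzero primes of this local ring properly contained in $\m$. So it suffices to show: a Noetherian local domain $A$ of dimension $\geq 2$ has infinitely many height-one primes (in fact infinitely many primes strictly between $(0)$ and $\m$).

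Next I would prove that claim. Suppose for contradiction there are only finitely many height-one primes $\p^{(1)}, \ldots, \p^{(n)}$. Pick any nonzero nonunit $a \in \m$; by Krull's principal ideal theorem every minimal prime of $(a)$ has height one, so $(a) \subseteq \p^{(1)} \cup \cdots \cup \p^{(n)}$, and actually since $V(a)$ is covered by these finitely many height-one primes we get $\m \not\subseteq \p^{(i)}$ for each $i$ (as $\dim A \geq 2$ means no height-one prime is maximal). The key step is then prime avoidance: I want to produce an element of $\m$ lying in none of the $\p^{(i)}$, which on its own is immediate from prime avoidance, but I need more — I need to derive a contradiction with $\dim A \geq 2$.

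Here is the cleaner route I would actually take. By prime avoidance choose $x \in \m$ with $x \notin \p^{(i)}$ for all $i$. Then $x$ lies in no height-one prime of $A$, so $\sqrt{(x)}$ has no height-one minimal prime; but by Krull's principal ideal theorem every minimal prime of $(x)$ has height $\leq 1$, and height $0$ is impossible since $A$ is a domain and $x \neq 0$. Hence $(x)$ has no minimal primes at all, forcing $(x) = A$, contradicting $x \in \m$. This contradiction shows there are infinitely many height-one primes, completing the argument. (One could alternatively avoid invoking prime avoidance in the form above and instead use that in a Noetherian domain of dimension $\geq 2$, for any height-one prime $\p$ one can find an element of $\m \setminus \p$ and, combined with a nonzero element, generate an ideal whose minimal primes give new height-one primes; but the version above is the most economical.)

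The main obstacle I anticipate is making sure the reduction to the local domain case is airtight — specifically, checking that height-one primes of $(A/\p_1)_{\p_2}$ biject with primes $\q$ strictly between $\p_1$ and $\p_2$, which is just the usual correspondence for quotients and localizations, and that the hypothesis $\he_{A/\p_1}(\p_2/\p_1) \geq 2$ translates exactly to $\dim$ of the local domain being $\geq 2$. After that the domain argument is short, the only subtlety being the repeated use of Krull's Hauptidealsatz to pin down that minimal primes of a nonzero nonunit principal ideal have height exactly one.
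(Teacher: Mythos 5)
Your proof is correct and follows essentially the same route as the paper: reduce via $(A/\p_1)_{\p_2}$ to a Noetherian local domain of dimension $\geq 2$, then combine Krull's principal ideal theorem with prime avoidance to rule out having only finitely many height-one primes. The only difference is cosmetic: the paper notes that $\m$ is covered by the union of the height-one primes and applies prime avoidance directly, whereas you apply its contrapositive to pick an element of $\m$ outside all of them and then feed that element to the Hauptidealsatz.
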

\begin{proof}
  After replacing $A$ by $(A/\p_1)_{\p_2}$, we can, without loss of
  generality, assume that $A$ is a local domain of dimension $\geq 2$,
  and it suffices to show that $A$ has infinitely many prime ideals of
  height $1$. Note that by {\em Krull's Principal Ideal Theorem} every
  non-unit is contained in a prime ideal of height $1$. As the maximal
  ideal $\p_2$ consists of the set of non-units in $A$, it follows that
  $\p_2$ is contained in the union of all prime ideals of height
  $1$. If there are only finitely many prime ideals $\q_1,\ldots,\q_k$
  of height $1$, then {\em Prime avoidance} implies that $\p_2$ is
  contained in one of the $\q_i$, a contradiction.
\end{proof}

The question which topological spaces can be realized as $\spec(A)$
for a Noetherian ring $A$ is unfortunately still open; an extensive
survey of the state of the art can be found in \cite{Wiegand}. For our
particular case we can, however, make use of the following result.
\begin{thm}[{\cite[Theorem B]{DoeringLequain}}]\label{thm:emb}
  Let $\mathcal{A}$ be a finite partially ordered set. Then there exist a
  reduced Noetherian ring $A$ and an embedding $\injfunc i {\mathcal{A}}
  \spec(A)$ with the following properties.
  \begin{enumerate}
  \item The map $i$ establishes a bijection between the maximal
    (resp.\ minimal) elements of $\mathcal{A}$ and the maximal
    (resp.\ minimal) elements of $\spec(A)$.
  \item For all $a,a'\in \mathcal{A}$ such that $a<a'$ is saturated in
    $\mathcal{A}$, the chain $i(a)\subseteq i(a')$ is saturated in
    $\spec(A)$.
  \item For all $a,a'\in\mathcal{A}$, there exists a saturated chain of
    prime ideals of length $r$ between $i(a)$ and $i(a')$ if and only
    if there exists a saturated chain of length $r$ between $a$ and
    $a'$.
  \end{enumerate}
\end{thm}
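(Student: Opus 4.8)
The statement is \cite[Theorem B]{DoeringLequain}, and a complete proof is long; I will only sketch the strategy I would follow. Note first that one cannot hope to realize $\mathcal{A}$ as $\spec(A)$ on the nose: by Proposition~\ref{prop:infinite_he1}, as soon as two elements of $\mathcal{A}$ are joined by a saturated chain of length at least two, the corresponding primes of $A$ are joined by infinitely many intermediate primes. So the best one can ask for is an embedding into a possibly much larger spectrum, and the real content is that the extra primes can be controlled.

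The plan is to construct $A$ in two stages: realize the ``rooted'' pieces of $\mathcal{A}$ by Noetherian \emph{domains}, then glue. Let $a_1,\dots,a_m$ be the minimal elements of $\mathcal{A}$ and set $\mathcal{A}_j=\{a\in\mathcal{A}\colon a\ge a_j\}$, an up-set with unique minimal element $a_j$. Assuming each $\mathcal{A}_j$ has been embedded as $i_j\colon\mathcal{A}_j\hookrightarrow\spec(A_j)$ into a Noetherian domain $A_j$ over a fixed field $k$ --- with $i_j(a_j)=(0)$, maximal elements going to maximal ideals and no others, covering relations going to saturated inclusions, the chain condition~(iii) holding, and the residue field at the image of a given maximal element independent of $j$ --- one builds $A$ as follows. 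For $j\ne j'$ the set $\mathcal{A}_j\cap\mathcal{A}_{j'}=\{a\colon a\ge a_j,\ a\ge a_{j'}\}$ is an up-set in both, hence equals $V(I_j)\subseteq\spec(A_j)$ for an ideal $I_j$, and one arranges $A_j/I_j\cong A_{j'}/I_{j'}$; then $A$ is the iterated fibre product of the $A_j$ over these common quotients, a subring of $\prod_j A_j$. The ring $A$ is reduced, and $\spec(A)$ is the union of the $\spec(A_j)$ glued along the closed subschemes $\spec(A_j/I_j)$, from which one reads off an embedding $i\colon\mathcal{A}\hookrightarrow\spec(A)$ with properties~(i)--(iii); in particular the minimal, resp.\ maximal, elements of $\mathcal{A}$ match the minimal primes, resp.\ maximal ideals, of $A$, since the gluing loci are up-sets containing no minimal element, and every maximal ideal already appears in one of the $A_j$. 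Carrying out the gluing one pair at a time, each step is a conductor square $0\to B\to B_1\times B_2\to B_1/J\to 0$ of rings in which $B_1/J$ is cyclic as a $B$-module; hence $B_1\times B_2$ is module-finite over $B$, and $B$ is Noetherian by the Eakin--Nagata theorem.

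It remains to treat the rooted case: embed a finite poset $P$ with a unique minimal element $p_0$ into the spectrum of a Noetherian domain with properties~(i)--(iii). This is where the real work lies. Localizations of finitely generated $k$-algebras will not do, because they are catenary by \cite[Corollaire (0.16.5.12)]{EGAIV1}, whereas $P$ may contain two saturated chains of different lengths between the same two elements, and then property~(iii) forces $A$ to be non-catenary between the corresponding primes. I would instead use Nagata's construction of non-catenary Noetherian domains: inside a polynomial ring $R=k[x_1,\dots,x_N]$ with $N$ large, choose primes $Q_p$ $(p\in P)$ with $Q_p\subsetneq Q_{p'}\iff p<p'$, with heights spaced generously so as to respect the partial order, and with the $Q_p$ for $p$ maximal taken $k$-rational; then choose a family $\Sigma$ of primes of $R$ whose maximal elements under inclusion are precisely those $Q_p$, and such that every nonzero element of $R$ lies in only finitely many members of $\Sigma$. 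By Nagata's criterion the generalized localization $A:=\bigcap_{\mathfrak{q}\in\Sigma}R_{\mathfrak{q}}$ is then a Noetherian domain whose spectrum, with the induced order, consists of exactly the primes of $R$ below some member of $\Sigma$; the maximal elements of $\Sigma$ become its maximal ideals, with residue fields still $k$. The gain is that a chain $Q_p\subsetneq\cdots\subsetneq Q_{p'}$ can be saturated in $\spec(A)$ while failing to be saturated in $R$, because the refining primes of $R$ may lie below no member of $\Sigma$; so, choosing $\Sigma$ so that for each saturated chain from $p$ to $p'$ in $P$ the corresponding chain of $Q$'s is saturated in $\spec(A)$, and no other chain between $Q_p$ and $Q_{p'}$ is, one obtains (ii) and~(iii).

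The hard part is precisely this last engineering step: for an arbitrary finite $P$ one must exhibit a configuration $\{Q_p\}$ in $R$ and a Nagata family $\Sigma$ so that, at once, every covering relation of $P$ becomes saturated in $\spec(A)$, no spurious saturated chain appears between images of elements of $P$ --- neither from the extra primes surviving in $\spec(A)$ nor, back in the first stage, from the gluing --- and no new minimal or maximal prime is introduced, while all along $\Sigma$ satisfies Nagata's finiteness condition and the successive gluings satisfy the hypotheses of Eakin--Nagata. Doing this for every finite poset, using the refinements of Nagata's construction due to Heitmann and others, and then verifying (i)--(iii) for the resulting ring, is the substance of \cite[Theorem B]{DoeringLequain}.
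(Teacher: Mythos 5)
The paper does not prove this statement at all: Theorem~\ref{thm:emb} is imported verbatim from Doering--Lequain as a black box, and the only thing the paper needs from it is the existence of the Noetherian ring used in Example~\ref{ex:butterfly}. So there is no internal proof to compare with, and your writeup should be judged as a standalone argument. As such it is an outline rather than a proof: you yourself defer ``the hard part'' (producing, for an arbitrary finite poset, the configuration of primes with exactly the prescribed saturated chains and no spurious ones) to the cited paper, and that engineering step is precisely the content of the theorem, so the proposal has a genuine gap by construction.

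Beyond the acknowledged deferral, one concrete step as stated would fail. You invoke a ``Nagata criterion'' asserting that for a family $\Sigma$ of primes of $R=k[x_1,\dots,x_N]$ with the locally finite property, the intersection $A=\bigcap_{\q\in\Sigma}R_{\q}$ is a Noetherian domain whose spectrum is exactly the set of primes of $R$ contained in some member of $\Sigma$. No such theorem holds in this generality: intersections of localizations of a Noetherian domain need not be Noetherian, and even when $R\to A$ is a flat epimorphism (so that $\spec(A)$ does embed into $\spec(R)$ with generization-closed image and $A$ inherits Noetherianity), one cannot realize an arbitrary generization-closed subset this way --- already the punctured plane inside $\spec(k[x,y])$ is not the image of any flat epimorphism. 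So the mechanism you propose for the rooted case needs a different justification. The actual route in Doering--Lequain is, as their title indicates, the gluing of \emph{maximal} ideals: one glues maximal ideals with the same residue field in (localizations of) polynomial rings, the non-catenary behaviour --- saturated chains of different lengths between the same two primes --- being produced by identifying maximal ideals of different heights, and Noetherianity of the glued subring following from module-finiteness via Eakin--Nagata. Your fibre-product step over larger closed subsets and the Eakin--Nagata argument are in the right spirit, but the first stage (prescribing the chain structure inside a Noetherian domain by deleting primes) is where your sketch both diverges from the reference and rests on an unavailable tool.
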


\begin{ex}\label{ex:butterfly}
  Consider the finite partially ordered set $\mathcal{A}$ described by the
  diagram \[\xymatrix{\bullet\ar@{-}[d] & \bullet \ar@{-}[d] \\
    \bullet \ar@{-}[d] & \bullet \ar@{-}[d]\\ \bullet\ar@{-}[uur] &
    \bullet\ar@{-}[uul]\rlap{}}\] (where we again write elements in
  increasing order from bottom to top). There exist a Noetherian
  ring $A$ and an embedding $\injfunc i{\mathcal{A}} \spec(A)$ satisfying
  the properties of Theorem~\ref{thm:emb}. 

  Let us have a closer look at the difference between $i(\mathcal{A})$ and
  $\spec(A)$. Every additional point has to lie between a minimal and
  a maximal element but it cannot break a saturated chain. Moreover,
  we have to have infinitely many prime ideals of height 1 in every
  component of dimension 2 by
  Proposition~\ref{prop:infinite_he1}. This shows that the underlying
  topological space of $X=\spec(A)$ is given
  as \[\xymatrix{\bullet\ar@{=}[d] & \bullet \ar@{=}[d] \\
    \cdots\bullet\bullet\bullet\cdots \ar@{=}[d] & \cdots
    \bullet\bullet \bullet \cdots \ar@{=}[d]\\ \bullet\ar@{-}[uur] &
    \bullet\ar@{-}[uul]\rlap{.}}\] Instead of having only two prime
  ideals of height $1$, there are infinitely many on each side. We see
  that $X$ is equidimensional, equicodimensional and catenary.
  However, there are maximal chains of length $1$ and of length $2$.
\end{ex}
This shows that even an affine Noetherian scheme that is weakly
biequidimensional need not be biequidimensional.

Another counterexample, which is moreover essentially of finite type
over a field, can be constructed in the following way.
\begin{ex}\label{ex:glue}
  Let $B= k[v,w,x,y]/(vy,wy)$. Then the spectrum $Y=\spec(B)$ is the
  scheme obtained by gluing $\mathbb{A}^3_k=\spec(k[v,w,x])$ and
  $\mathbb{A}^2_k=\spec(k[x,y])$ along the lines $v=w=0$ and $y=0$.
  Localizing $B$ away from the union $(v,w,x,y-1)\cup(v,w,y)$ of prime
  ideals gives a Noetherian ring $A$ with two minimal and two maximal
  ideals. The spectrum $X$ of $A$ looks
  like \[\xymatrix{(v,w)\ar@{-}[ddr]\ar@{=}[d] & (y) \ar@{=}[d] \\
    \cdots (v,w,x) \cdots\ar@{=}[d] & \cdots (w,y)\cdots \ar@{=}[d]\\
    (v,w,x,y-1) & (v,w,y)\rlap{,} } \] with infinitely many prime
  ideals between $(v,w)$ and $(v,w,x,y-1)$ and between $(y)$ and
  $(v,w,y)$.

  We observe that $X$ is equidimensional, equicodimensional and
  catenary.  However, we have the maximal chains $(v,w)\subsetneq
  (v,w,y)$ of length 1 and $(v,w) \subsetneq (v,w,x) \subsetneq
  (v,w,x,y-1)$ of length 2.

  Note that here, unlike in Example~\ref{ex:butterfly}, the ring is
  given explicitly.
\end{ex}

\begin{rem}
  The scheme constructed in Example~\ref{ex:glue} is essentially of
  finite type over a field. Note that there are no counterexamples
  that are locally of finite type over a field. In fact, by
  Lemma~\ref{lm:fin_type}, every equidimensional,
  and hence in particular every weakly biequidimensional, scheme
  locally of finite type over a field is biequidimensional.
\end{rem}

\section{The dimension formula}\label{sec:dimension-formula}
Next we show that the dimension formula holds in every
biequidimensional space. However, a modification of
Example~\ref{ex:glue} gives a weakly biequidimensional scheme where
the dimension formula does not hold.
\begin{prop}[{\cite[Corollaire (0.14.3.5)]{EGAIV1}}]
\label{prop:dimension-formula}
Let $X$ be a bi\-equi\-dimen\-sion\-al topological space. Then the
{\em dimension formula} holds for every irreducible closed subset $Y$
of $X$, that is, we have that
  \begin{align}\label{eq:dim_formula}
    \dim(X)=\dim(Y)+\codim(Y,X).
  \end{align}
\end{prop}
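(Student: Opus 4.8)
The plan is to derive the dimension formula directly from the characterization of biequidimensionality given in Proposition~\ref{prop:biequidim_equiv}. Indeed, assertion~\ref{item:bieq2} of that proposition says precisely that a biequidimensional space $X$ is equidimensional and that $\dim(Z) = \dim(Y) + \codim(Y,Z)$ holds for \emph{all} pairs of irreducible closed subsets $Y \subseteq Z$. So the first step is simply to invoke this equivalence.

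The only remaining work is to pass from the relative statement to the absolute one, i.e.\ to take $Z = X$ in Equation~\eqref{eq:bieq1}. This requires choosing, for a given irreducible closed $Y \subseteq X$, an irreducible component $Z$ of $X$ containing $Y$ (such a $Z$ exists since $X$ is Noetherian, hence has finitely many irreducible components, and $Y$, being irreducible, is contained in one of them). Applying Equation~\eqref{eq:bieq1} to $Y \subseteq Z$ gives $\dim(Z) = \dim(Y) + \codim(Y,Z)$. Now I would use equidimensionality of $X$ to replace $\dim(Z)$ by $\dim(X)$, and observe that $\codim(Y,Z) = \codim(Y,X)$: this last equality holds because $Z$ is a component of $X$, so every irreducible closed subset of $X$ lying between $Y$ and some maximal element must lie inside $Z$, whence the maximal chains from $Y$ upward in $X$ are exactly those in $Z$. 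Combining these substitutions yields $\dim(X) = \dim(Y) + \codim(Y,X)$, which is Equation~\eqref{eq:dim_formula}.

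I do not anticipate a genuine obstacle here, since the proposition is essentially a repackaging of Proposition~\ref{prop:biequidim_equiv}\ref{item:bieq2}. The only point demanding a little care is the identification $\codim(Y,Z) = \codim(Y,X)$ for a component $Z \supseteq Y$; one should note that a priori $\codim(Y,X)$ is the supremum over all components containing $Y$ of the codimension of $Y$ in that component, but equidimensionality of $X$ together with Equation~\eqref{eq:bieq1} forces this value to be the same, namely $\dim(X) - \dim(Y)$, for every such component, so there is no ambiguity. Alternatively, one can avoid the issue entirely by arguing at the level of chains: any maximal chain of irreducible closed subsets through $Y$ splits as a saturated chain of length $\dim(Y)$ below $Y$ followed by a saturated chain of length $\codim(Y,X)$ above it, and since all maximal chains in $X$ have length $\dim(X)$ by biequidimensionality, the formula follows immediately.
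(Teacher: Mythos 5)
Your proposal is correct, but it reaches the formula by a slightly different route than the paper. You derive it from Proposition~\ref{prop:biequidim_equiv}\ref{item:bieq2}: pick an irreducible component $Z\supseteq Y$, apply Equation~\eqref{eq:bieq1} to $Y\subseteq Z$, use equidimensionality to replace $\dim(Z)$ by $\dim(X)$, and identify $\codim(Y,Z)$ with $\codim(Y,X)$. The paper instead gives a self-contained two-line argument: choose a chain of length $\dim(Y)$ ending at $Y$ and a chain of length $\codim(Y,X)$ starting at $Y$, compose them into a maximal chain, and invoke biequidimensionality to conclude its length is $\dim(X)$; this avoids the component bookkeeping entirely, while your route leans on the already-proved equivalence (which is legitimate, since Proposition~\ref{prop:biequidim_equiv} precedes this statement and its proof contains the same chain-composition idea). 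Two small points of care in your write-up: your first justification of $\codim(Y,Z)=\codim(Y,X)$ (``every irreducible closed subset between $Y$ and a maximal element lies inside $Z$'') is false when $Y$ lies in several components, but you correctly repair it yourself by noting that $\codim(Y,X)$ is the supremum of $\codim(Y,Z')$ over components $Z'\supseteq Y$ and that Equation~\eqref{eq:bieq1} plus equidimensionality makes all these values equal to $\dim(X)-\dim(Y)$. In your sketched alternative, ``any maximal chain through $Y$ splits as a saturated chain of length $\dim(Y)$ below $Y$ followed by one of length $\codim(Y,X)$ above it'' is not immediate (a priori the lower part of an arbitrary maximal chain through $Y$ need not have length $\dim(Y)$); the clean statement, and the one the paper uses, is that \emph{some} such composed chain exists and is maximal, whence biequidimensionality gives $\dim(Y)+\codim(Y,X)=\dim(X)$.
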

\begin{proof}
  Let $Y$ be an irreducible closed subset of $X$. We can choose
  maximal chains $Y_0\subsetneq Y_1\subsetneq\ldots \subsetneq Y_l=Y$ and
  \mbox{$Y=Y_0'\subsetneq Y_1'\subsetneq\ldots\subsetneq Y_k'$} of length
  $l=\dim(Y)$ and $k=\codim(Y,X)$ respectively. Then the composed
  chain $Y_0\subsetneq\ldots \subsetneq Y_l\subsetneq Y_1'\subsetneq
  \ldots \subsetneq Y_k'$
  is maximal. As $X$ is biequidimensional, this chain has length
  $\dim(X)$.
\end{proof}
Observe that the weakly biequidimensional Noetherian schemes that we
constructed in Examples~\ref{ex:butterfly} and \ref{ex:glue} satisfy
the dimension formula~(\ref{eq:dim_formula}). However, it does not in
general hold for weakly biequidimensional spaces as the following
modification of Example~\ref{ex:glue} shows.
\begin{ex}\label{ex:glue2}
  Consider $B = k[u,v,w,x,y,z] / (uy,uz,vy,vz,wy,wz)$. This is the
  coordinate ring of the scheme obtained by gluing the affine spaces
  $\mathbb{A}_k^4=\spec(k[u,v,w,x])$ and
  $\mathbb{A}_k^3=\spec(k[x,y,z])$ along the lines $u=v=w=0$ and
  $y=z=0$. Localization away from the union of the prime ideals
  $(u,v,w,y,z)$ and $(u,v,w,x,y-1,z-1)$ gives a Noetherian ring $A$ of
  pure dimension 3. The spectrum $X=\spec(A)$ is catenary, and it has
  two closed points corresponding to the prime ideals $(u,v,w,y,z)$
  and $(u,v,w,x,y-1,z-1)$, and both have codimension 3 in $X$. In
  particular, the scheme $X$ is weakly biequidimensional. Moreover, we
  have the following saturated chains of prime ideals in $X$:
  \[\xymatrix{  (u,v,w)\ar@{-}[d]&&(y,z)\ar@{-}[d]\\
    (u,v,w,x) \ar@{-}[d] && (w,y,z)\ar@{-}[d]\\
    (u,v,w,x,y-1) \ar@{-}[d] &&(v,w,y,z)\ar@{-}[d] \\
    (u,v,w,x,y-1,z-1) &&(u,v,w,y,z)\rlap{.} \ar@{-}[uuull]|-{\textstyle
      (u,v,w,y)}}\] We see that the prime ideal $\p=(u,v,w,y)$ has the
  property that $$\he(\p)+\dim(A/\p)=1+1=2<3=\dim(A),$$ that is, the
  dimension formula does not hold.
\end{ex}

\section{Existence of a codimension
  function}\label{sec:codim-function}
We conclude by showing that every biequidimensional topological space
has a codimension function whereas a weakly biequidimensional space
need not have it.
\begin{defi}[{\cite[Definition on p.~283]{Hartshorne:RD}}]
  Let $X$ be a topological space. A {\em codimension function} is a
  function $\func d X \mathbb{Z}$ such that \[d(x')=d(x)+1\] holds for every
  specialization $x'\in\overline{\{x\}}$ such that
  $\codim(\overline{\{x'\}},\overline{\{x\}})=1$.
\end{defi}

\begin{lm}\label{lm:loc_fin-codim-funct}
  \begin{enumerate}
  \item\label{item:loc_fin-codim-funct1} Let $X$ be a scheme locally
    of finite type over a field. Then
    {\[d(x)=-\dim(\overline{\{x\}})\]} defines a codimension function
    on $X$.
  \item Let $X$ be a scheme essentially of finite type over a field
    $k$. Then \[d(x)=-\trdeg_k(\kappa(x))\] defines a codimension
    function on $X$.
  \end{enumerate}
\end{lm}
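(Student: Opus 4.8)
The plan is to verify directly the defining property of a codimension function: given a point $x\in X$ and a specialization $x'\in\overline{\{x\}}$ with $\codim(\overline{\{x'\}},\overline{\{x\}})=1$, I must show that $d(x')=d(x)+1$. Note first that in both cases $d$ really takes values in $\mathbb Z$: one has $\dim\overline{\{x\}}\le\dim X<\infty$ in~\ref{item:loc_fin-codim-funct1}, and $\kappa(x)$ is finitely generated over $k$, hence $\trdeg_k\kappa(x)<\infty$, in~(ii).

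For~\ref{item:loc_fin-codim-funct1} I would argue as follows. Let $Z=\overline{\{x\}}$ carry its reduced induced structure; then $Z$ is an integral --- in particular equidimensional --- scheme locally of finite type over $k$, so by Lemma~\ref{lm:fin_type} it is biequidimensional. Applying Proposition~\ref{prop:dimension-formula} to the irreducible closed subset $\overline{\{x'\}}\subseteq Z$ gives $\dim\overline{\{x\}}=\dim Z=\dim\overline{\{x'\}}+\codim(\overline{\{x'\}},Z)=\dim\overline{\{x'\}}+1$, which is precisely $d(x')=d(x)+1$. (To match the standing convention that the spaces appearing in Lemma~\ref{lm:fin_type} and Proposition~\ref{prop:dimension-formula} are Noetherian and of finite dimension, one first replaces $X$ by an affine open neighbourhood of $x'$; this affects nothing, as $x$ lies in it and neither the codimension at $x'$ nor the dimension of the integral scheme $\overline{\{x\}}$ changes.)

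For~(ii) the relevant object is the local ring $R=\mathcal O_{X,x'}$. The generization $x$ of $x'$ corresponds to a prime $\p\subseteq R$, and the reduced closed subscheme $\overline{\{x\}}$ has local ring $\mathcal O_{\overline{\{x\}},x'}=R/\p$ at $x'$, which is a local domain with residue field $\kappa(x')$, fraction field $\kappa(x)$, and $\dim(R/\p)=\codim(\overline{\{x'\}},\overline{\{x\}})=1$. Since $R$, and hence $R/\p$, is essentially of finite type over $k$, I can write $R/\p\cong C_{\mathfrak n}$ for a domain $C$ finitely generated over $k$ and a prime $\mathfrak n\subseteq C$, so that $\he_C(\mathfrak n)=\dim(R/\p)=1$, $\operatorname{Frac}(C)=\kappa(x)$ and $\kappa(\mathfrak n)=\kappa(x')$. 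The dimension formula for finitely generated algebras over a field (\cite[Proposition~(5.2.1) and Equation~(5.2.1.1)]{EGAIV2}) then gives $1=\he_C(\mathfrak n)=\trdeg_k\operatorname{Frac}(C)-\trdeg_k\kappa(\mathfrak n)=\trdeg_k\kappa(x)-\trdeg_k\kappa(x')$, i.e.\ $d(x')=d(x)+1$. One could also deduce~\ref{item:loc_fin-codim-funct1} from this, using that $\dim\overline{\{x\}}=\trdeg_k\kappa(x)$ when $X$ is locally of finite type over $k$.

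The only genuine input here is the dimension formula, i.e.\ Noether normalization, for finitely generated algebras over a field --- together with Lemma~\ref{lm:fin_type} and Proposition~\ref{prop:dimension-formula} for the first part; the remaining ingredients, namely the identification of $R/\p$ and of its residue and fraction fields and the invariance of heights and transcendence degrees under localization, are routine, so I do not anticipate a real obstacle.
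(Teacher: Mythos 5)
Your proof is correct. Part~\ref{item:loc_fin-codim-funct1} is exactly the paper's argument: pass to the integral scheme $\overline{\{x\}}$, which is locally of finite type and hence biequidimensional by Lemma~\ref{lm:fin_type}, and apply the dimension formula of Proposition~\ref{prop:dimension-formula} to $\overline{\{x'\}}\subseteq\overline{\{x\}}$; your remark about first shrinking to an affine open to stay within the standing Noetherian finite-dimensional hypotheses is a harmless (and reasonable) precaution. In part (ii) your organization differs from the paper's: the paper replaces $X$ by an affine open $\spec(S^{-1}B)$, observes that residue fields are unchanged when passing to $\spec(B)$, and then reduces to the finite-type case, where $\trdeg_k(\kappa(x))=\dim(\overline{\{x\}})$ by \cite[Proposition (5.2.1)]{EGAIV2} identifies $d$ with the codimension function of part~\ref{item:loc_fin-codim-funct1}; you instead work directly in the local ring $\mathcal{O}_{X,x'}$, identify $\mathcal{O}_{\overline{\{x\}},x'}\cong C_{\mathfrak n}$ for a finitely generated domain $C$ with $\he_C(\mathfrak n)=1$, and apply the formula $\he_C(\mathfrak n)=\trdeg_k\operatorname{Frac}(C)-\trdeg_k\kappa(\mathfrak n)$. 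Both arguments rest on the same input from \cite{EGAIV2}; your version has the small advantage that localizing at $x'$ makes it transparent that the codimension-one condition is preserved in the reduction (a point the paper's ``hence it suffices'' passage leaves implicit), and it makes (ii) independent of (i), whereas the paper's version is shorter because it simply reuses part~\ref{item:loc_fin-codim-funct1}.
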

\begin{proof}
  \begin{enumerate}
  \item We have to show that $\dim(\overline{\{x'\}}) =
    \dim(\overline{\{x\}})-1$ for all points $x,x'\in X$ such that
    $x'\in\overline{\{x\}}$ and
    $\codim(\overline{\{x'\}},\overline{\{x\}})=1$. The irreducible
    subscheme $\overline{\{x\}}$ is locally of finite type and hence
    biequidimensional by Lemma~\ref{lm:fin_type}. Then the statement
    is a direct consequence of the dimension formula, see
    Proposition~\ref{prop:dimension-formula}.
  \item Let $x,x'\in X$ be such that $x'\in\overline{\{x\}}$ and
    $\codim(\overline{\{x'\}},\overline{\{x\}})=1$. After, if
    necessary, replacing $X$ by an open affine neighborhood of $x'$,
    we can without loss of generality assume that $X=\spec(A)$, where
    $A=S^{-1}B$ for a finitely generated $k$-algebra $B$. Then $x$ and
    $x'$ correspond to points in $\spec(B)$ having residue fields
    $\kappa(x)$ and $\kappa(x')$. Hence it suffices to show that
    $d(x)=-\trdeg_k(\kappa(x))$ is a codimension function if $X$ is a
    scheme of finite type over $k$. In this case, we have by
    \cite[Proposition (5.2.1)]{EGAIV2} that
    \mbox{$\trdeg_k(\kappa(x))=\dim(\overline{\{x\}})$}, and $d(x)$ is
    the codimension function discussed in
    \ref{item:loc_fin-codim-funct1}.\qedhere
  \end{enumerate}
\end{proof}
\begin{prop}\label{prop:bieq-codim-funct}
  Let $X$ be a biequidimensional topological space. The
  map $$d(x)=\codim(\overline{\{x\}},X)$$ defines a codimension
  function on $X$.
\end{prop}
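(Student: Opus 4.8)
The plan is to reduce everything to the characterization of biequidimensionality given in Proposition~\ref{prop:biequidim_equiv}, specifically the equivalence with assertion~\ref{item:bieq3}. That assertion says that a biequidimensional $X$ is equicodimensional and that codimension is additive along inclusions of irreducible closed subsets: for all irreducible closed $Y\subseteq Z$ in $X$ one has $\codim(Y,X)=\codim(Y,Z)+\codim(Z,X)$ (Equation~(\ref{eq:bieq2})). Since the hypothesis on $X$ (Noetherian $T_0$ of finite dimension) guarantees all the codimensions in sight are finite non-negative integers, the map $d(x)=\codim(\overline{\{x\}},X)$ is a well-defined function $X\to\mathbb{Z}$, and it only remains to check the defining increment property.

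So first I would fix a point $x\in X$ and a specialization $x'\in\overline{\{x\}}$ with $\codim(\overline{\{x'\}},\overline{\{x\}})=1$. Setting $Y=\overline{\{x'\}}$ and $Z=\overline{\{x\}}$, these are irreducible closed subsets of $X$ with $Y\subseteq Z$ (as $x'\in\overline{\{x\}}$ forces $\overline{\{x'\}}\subseteq\overline{\{x\}}$). Applying Equation~(\ref{eq:bieq2}) from Proposition~\ref{prop:biequidim_equiv}\,\ref{item:bieq3} to this inclusion gives
\[
  d(x')=\codim(\overline{\{x'\}},X)=\codim(\overline{\{x'\}},\overline{\{x\}})+\codim(\overline{\{x\}},X)=1+d(x),
\]
which is exactly the condition in the definition of a codimension function. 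This completes the argument.

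In truth there is no real obstacle here: the content has been packaged into Proposition~\ref{prop:biequidim_equiv}, and the proposition is doing all the work. The only point worth noting is that one genuinely needs the additivity statement~(\ref{eq:bieq2}) — equivalently, that $X$ is biequidimensional and not merely weakly biequidimensional — since on a weakly biequidimensional space codimension need not be additive along chains, and indeed Section~\ref{sec:codim-function} goes on to exhibit a weakly biequidimensional space with no codimension function at all. One could alternatively prove the increment directly by splicing saturated chains (a saturated chain of length $\codim(Z,X)$ from $Z$ up to an irreducible component, prepended by the length-$1$ step $Y\subsetneq Z$, gives a saturated chain from $Y$ of length $\codim(Z,X)+1$, and biequidimensionality forces every such chain to compute $\codim(Y,X)$), but invoking Proposition~\ref{prop:biequidim_equiv} is cleaner and avoids repeating that bookkeeping.
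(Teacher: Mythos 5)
Your proof is correct and is exactly the paper's argument: the paper also deduces the increment $d(x')=d(x)+1$ as a direct application of Equation~(\ref{eq:bieq2}) from Proposition~\ref{prop:biequidim_equiv}, and you have merely spelled out the substitution $Y=\overline{\{x'\}}$, $Z=\overline{\{x\}}$ that the paper leaves implicit.
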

\begin{proof}
  The statement is a direct application of Equation~(\ref{eq:bieq2})
  in Proposition~\ref{prop:biequidim_equiv}.
\end{proof}
\begin{rem}
  Note that by Proposition~\ref{prop:dimension-formula} the
  codimension function in Proposition~\ref{prop:bieq-codim-funct} can
  be written as $d(x)=\dim(X)-\dim(\overline{\{x\}})$.  

  In particular, for a biequidimensional scheme locally of finite type
  over a field the codimension functions defined in
  Lemma~\ref{lm:loc_fin-codim-funct}\ref{item:loc_fin-codim-funct1}
  and in Proposition~\ref{prop:bieq-codim-funct} differ only by the
  constant term $\dim(X)$.
\end{rem}

Moreover, we have the following necessary and sufficient condition for
the function $d(x)=\codim(\overline{\{x\}},X)$ to be a codimension function.
\begin{prop}\label{prop:codim_func_local_rings}
  Let $X$ be a scheme. Then $d(x)=\codim(\overline{\{x\}},X)$ is a
  codimension function if and only if all local rings are catenary and
  equidimensional.
\end{prop}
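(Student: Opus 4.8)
The plan is to work locally, translating the global condition about the function $d(x)=\codim(\overline{\{x\}},X)$ into statements about each affine chart and hence about each local ring $\mathcal{O}_{X,x}$. The key observation is that for a point $x$ and a specialization $x'\in\overline{\{x\}}$, the codimensions $\codim(\overline{\{x\}},X)$ and $\codim(\overline{\{x'\}},X)$ can be computed as $\dim\mathcal{O}_{X,\eta}$ for appropriate closed points... more precisely, by the order-reversing correspondence between $\spec\mathcal{O}_{X,x'}$ and irreducible closed subsets of $X$ containing $\overline{\{x'\}}$, one has $\codim(\overline{\{x'\}},X)\geq\codim(\overline{\{x'\}},\overline{\{x\}})+\codim(\overline{\{x\}},X)$ always, with equality for all such pairs being precisely the condition defining when $d$ is a codimension function (in the case where the codimension of $\overline{\{x'\}}$ in $\overline{\{x\}}$ is $1$; but one reduces the general saturated-chain statement to length-one steps by catenariness, so some care is needed in ordering the argument).

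First I would prove the ``if'' direction. Assume every local ring is catenary and equidimensional. Given $x'\in\overline{\{x\}}$ with $\codim(\overline{\{x'\}},\overline{\{x\}})=1$, I want $d(x')=d(x)+1$, i.e. $\codim(\overline{\{x'\}},X)=\codim(\overline{\{x\}},X)+1$. Pass to the local ring $A=\mathcal{O}_{X,x'}$, in which $x$ corresponds to a prime $\p$ of height $1$ (the height-one condition is exactly $\codim(\overline{\{x'\}},\overline{\{x\}})=1$). Since $A$ is equidimensional and catenary, every minimal prime $\q$ of $A$ satisfies $\dim A/\q = \dim A$ and $\he(\p/\q)+\dim(A/\p)=\dim(A/\q)$, so $\he(\p)=1$ forces $\dim A = \dim A/\p + 1$. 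Now $\dim A/\p$ is the supremum of lengths of chains of primes of $A$ containing $\p$, which by the correspondence equals $\codim(\overline{\{x\}},X)$ — here I use that the chains in question reach up to irreducible components of $X$, and $\dim A$ likewise equals $\codim(\overline{\{x'\}},X)$. This gives the desired equality. One subtlety: the ``$\dim(A/\p)=\codim(\overline{\{x\}},X)$'' step requires that codimension in $X$ is realized by a chain through some irreducible component, which holds because $X$ is Noetherian of finite dimension (our standing hypothesis) — I would spell this out.

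For the ``only if'' direction, assume $d$ is a codimension function. I first show all local rings are catenary: given a local ring $\mathcal{O}_{X,x}$ and primes $\q\subseteq\p$ in it (corresponding to $\overline{\{x\}}\subseteq Z_\p\subseteq Z_\q$ among irreducible closed subsets containing $\overline{\{x\}}$), any two saturated chains from $\q$ to $\p$ give, via the correspondence, two saturated chains of irreducible closed subsets from $Z_\p$ to $Z_\q$; applying the codimension-function identity $d$ telescopes along each such chain to give $d(\text{generic pt of }Z_\p)-d(\text{generic pt of }Z_\q)=$ (length of chain), so both lengths equal the same difference, proving catenariness. For equidimensionality of $\mathcal{O}_{X,x}$: by Lemma~\ref{lm:scheme_bieq}'s internal remark, it suffices to show $\overline{\{x\}}$ has the same codimension in every irreducible component of $X$ containing it; given such a component $Z$ with generic point $\xi$, a maximal chain from $\xi$ down to $x$ telescopes under $d$ to $d(x)-d(\xi)$, and since $d(\xi)=\codim(Z,X)$ depends only on $Z$ while $d(x)$ is fixed, all such chains from any component containing $x$ have the same length $d(x)-d(\xi)$ — wait, $d(\xi)$ may differ between components, so I instead argue: $\codim(\overline{\{x\}},Z)=d(x)-d(\xi)=\codim(\overline{\{x\}},X)-\codim(Z,X)$, and combined with catenariness (already shown) plus the easy inequality $\codim(\overline{\{x\}},X)\geq\codim(\overline{\{x\}},Z)+\codim(Z,X)$, one extracts that $\codim(\overline{\{x\}},Z)$ is independent of $Z$; hence $\mathcal{O}_{X,x}$ is equidimensional.

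The main obstacle will be the bookkeeping in the ``only if'' direction: carefully relating codimension in $X$ (a global invariant, a sup over chains possibly passing through several components) to codimension inside a single component $Z$, and ensuring that the telescoping of $d$ along a maximal chain is legitimate — i.e., that each successive step in a \emph{maximal} chain of irreducible closed subsets really does have codimension exactly $1$, which is where catenariness (proved first) is needed. I expect the clean statement to be: once catenariness is in hand, for any maximal chain $X_0\subsetneq\cdots\subsetneq X_n$ through $x=$ generic point of some $X_i$, each step has codimension $1$, so $d$ increments by $1$ at each step, yielding $d(x_i)=i + d(x_0)$ and $n - i = d(x_n)-d(x_i)$; feeding this into the definitions of $\codim(\overline{\{x\}},X)$ and $\codim(\overline{\{x\}},Z)$ closes the argument. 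I would present the two directions as separate paragraphs, invoking Lemma~\ref{lm:scheme_bieq} and Proposition~\ref{prop:biequidim_equiv} where the local-to-global dictionary is already recorded.
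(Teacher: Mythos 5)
Your ``only if'' direction is correct and is essentially the paper's own argument: telescoping $d$ along saturated chains of irreducible closed subsets (note that a saturated inclusion automatically has codimension exactly $1$, since nothing lies strictly between, so no catenariness is needed to legitimize the telescoping) shows that any two saturated chains between fixed ends have the same length, hence $X$ and all local rings are catenary; and applied to maximal chains in $\spec(\mathcal{O}_{X,x})$, which end at irreducible components $Z$ of $X$ with $d(\xi_Z)=\codim(Z,X)=0$, it gives that $\codim(\overline{\{x\}},Z)=d(x)$ is independent of the component $Z\ni x$, i.e.\ $\mathcal{O}_{X,x}$ is equidimensional.

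The ``if'' direction, however, contains a concrete error: you apply the prime/closed-subset dictionary backwards. Writing $A=\mathcal{O}_{X,x'}$ and letting $\p\subseteq\m$ be the prime corresponding to $x$, chains of primes \emph{containing} $\p$ correspond to irreducible closed subsets between $\overline{\{x'\}}$ and $\overline{\{x\}}$, while chains of primes \emph{contained in} $\p$ correspond to irreducible closed subsets containing $\overline{\{x\}}$. Hence the hypothesis $\codim(\overline{\{x'\}},\overline{\{x\}})=1$ means $\dim(A/\p)=1$, not $\he(\p)=1$, and $\codim(\overline{\{x\}},X)=\he(\p)$, not $\dim(A/\p)$. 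As written, your intermediate claims are false in general ($\he(\p)$ can be arbitrarily large, and the derived identity $\dim(A)=\dim(A/\p)+1$ would assert $\codim(\overline{\{x'\}},X)=2$ for every such pair); the final displayed equality only comes out right because the two mistranslations are dual to one another and cancel. The fix stays entirely within your strategy: for any minimal prime $\q\subseteq\p$, the catenary local domain $A/\q$ satisfies $\he(\p/\q)+\dim(A/\p)=\dim(A/\q)=\dim(A)$ by equidimensionality, so $\dim(A/\p)=1$ gives $\he(\p)=\dim(A)-1$, i.e.\ $\codim(\overline{\{x'\}},X)=\codim(\overline{\{x\}},X)+1$. (The paper argues equivalently, and slightly more directly, by appending $\overline{\{x'\}}$ to a saturated chain of length $\codim(\overline{\{x\}},X)$ above $\overline{\{x\}}$; the result is a maximal chain in $\spec(\mathcal{O}_{X,x'})$ and therefore has length $\dim(\mathcal{O}_{X,x'})$ because that ring is catenary and equidimensional.)
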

\begin{proof}
  The existence of a codimension function directly implies that $X$
  and hence all the local rings are catenary.

  Suppose first that $d(x)=\codim(\overline{\{x\}},X)$ is a
  codimension function, and let $x\in X$. Let
  $\overline{\{x\}}=X_0\subsetneq \ldots \subsetneq X_k$ and
  $\overline{\{x\}}=X_0'\subsetneq \ldots \subsetneq X_l'$ correspond
  to two maximal chains in $\spec(\mathcal{O}_{X,x})$. Then the
  assumption on $d(x)$ implies that
  $\codim(\overline{\{x\}},X)=\codim(X_k,X)+k$ as well as that
  \mbox{$\codim(\overline{\{x\}},X)=\codim(X_l',X)+l$}. Both $X_k$ and
  $X_l'$ are irreducible components in $X$ and hence
  $k=\codim(\overline{\{x\}},X)=l$. This shows that the local ring
  $\mathcal{O}_{X,x}$ is equidimensional.

  For the converse implication, we assume that all local rings in $X$
  are catenary and equidimensional. Let $x'\in \overline{\{x\}}$ be a
  direct specialization. Let $\overline{\{x\}}=X_0\subsetneq \ldots
  \subsetneq X_k$ be a saturated chain of length
  $\codim(\overline{\{x\}},X)$. The extended chain
  $\overline{\{x'\}}\subsetneq X_0 \subsetneq \ldots \subsetneq X_k$
  corresponds to a maximal chain in $\spec(\mathcal{O}_{X,x'})$, and
  it has length
  \mbox{$\dim(\mathcal{O}_{X,x'})=\codim(\overline{\{x'\}},X)$} since
  the local ring $\mathcal{O}_{X,x'}$ is catenary and
  equidimensional. It follows that $\codim(\overline{\{x'\}},X) =
  \codim(\overline{\{x\}},X)+1$, that is, the function $d(x)$ is a
  codimension function.
\end{proof}
\begin{rem}
  The results of Proposition~\ref{prop:codim_func_local_rings} are
  stated in \cite[p.266]{Thorup} in the context of catenary gradings
  on schemes.
\end{rem}
As a consequence of Lemma~\ref{lm:loc_fin-codim-funct}, we see that
the schemes constructed in Examples~\ref{ex:glue} and ~\ref{ex:glue2}
do have codimension functions. The following discussion however shows
that a codimension function need not exist for a weakly
biequidimensional space.

\begin{ex}
  The scheme $X$ constructed in Example~\ref{ex:butterfly} does not
  have any codimension function. Consider the irreducible
  closed subsets $Z_1,\ldots,Z_6$ in $X$ satisfying the following
  inclusion relations \[\xymatrix{Z_5\ar@{-}[d] & Z_6 \ar@{-}[d]
    \\ Z_3  \ar@{-}[d] & Z_4 \ar@{-}[d]\\
    Z_1\ar@{-}[uur] & Z_2\ar@{-}[uul]\rlap{.}}\] For $i=1,\ldots,6$
  let $z_i$ be the generic point of the irreducible set $Z_i$. 
Every codimension function $\func d X \mathbb{Z}$ would then have to satisfy
  $d(z_1)=d(z_5)+2=d(z_6)+1$ and $d(z_2)=d(z_5)+1=d(z_6)+2$, which is
  impossible.

  Note that $X$ is not only weakly biequidimensional but it satisfies
  the dimension formula. Still it does not have any codimension
  function.

  Furthermore, it follows from Lemma~\ref{lm:loc_fin-codim-funct} that
  $X$ is not essentially of finite type over a field.

  As a dualizing complex on a locally Noetherian scheme gives rise to
  a codimension function, see \cite[Proposition V.7.1]{Hartshorne:RD},
  we see moreover that $X$ does not have a dualizing complex.
\end{ex}

\end{document}